\newtheorem{theorem}{Theorem}
\theoremstyle{plain}
\newtheorem{lemma}{Lemma}
\newtheorem{proposition}{Proposition}
\numberwithin{equation}{section}
\begin{document}
\title[Hermite-Hadamard type]{some new inequalities of Hermite-Hadamard's
type }
\author{Aziz Saglam}
\address{Department of Mathematics,Faculty of Science and Arts, Afyon
Kocatepe University, Afyon, Turkey}
\email{azizsaglam@aku.edu.tr}
\thanks{$^{\star }$corresponding author}
\author{Mehmet Zeki Sar\i kaya$^{\star }$}
\address{Department of Mathematics,Faculty of Science and Arts, D\"{u}zce
University, D\"{u}zce, Turkey}
\email{sarikayamz@gmail.com, sarikaya@aku.edu.tr}
\author{Huseyin Y\i ld\i r\i m}
\address{Department of Mathematics,Faculty of Science and Arts, Afyon
Kocatepe University, Afyon, Turkey}
\email{hyildir@aku.edu.tr}
\date{}
\subjclass[2000]{ 26D15, }
\keywords{convex function, Hermite-Hadamard inequality.}

\begin{abstract}
In this paper, we establish several new inequalities for some differantiable
mappings that are connected with the celebrated Hermite-Hadamard integral
inequality. Some applications for special means of real numbers are also
provided.
\end{abstract}

\maketitle

\section{Introduction}

The following inequality is well known in the literature as the
Hermite-Hadamard integral inequality (see, \cite{PPT}):

\begin{equation}
f\left( \frac{a+b}{2}\right) \leq \frac{1}{b-a}\int_{a}^{b}f(x)dx\leq \frac{%
f(a)+f(b)}{2}  \label{H}
\end{equation}%
where $f:I\subset \mathbb{R}\rightarrow \mathbb{R}$ is a convex function on
the interval $I$ of real numbers and $a,b\in I$ with $a<b$.A function $%
f:[a,b]\subset \mathbb{R}\rightarrow \mathbb{R}$ is said to be convex if \
whenever $x,y\in \lbrack a,b]$ and $t\in \left[ 0,1\right] $, the following
inequality holds%
\begin{equation*}
f(tx+(1-t)y)\leq tf(x)+(1-t)f(y).
\end{equation*}%
This definition has its origins in Jensen's results from \cite{jensen} and
has opened up the most extended, useful and multi-disciplinary domain of
mathematics, namely, canvex analysis. Convex curvers and convex bodies have
appeared in mathematical literature since antiquity and there are many
important resuls related to them. We say that $f$ is concave if $(-f)$ is
convex.

A largely applied inequality for convex functions, due to its geometrical
significance, is Hadamard's inequality, (see \cite{SSDRPA},\cite{CEMPJP}, 
\cite{USK} and \cite{USKMEO}) which has generated a wide range of directions
for extension and a rich mathematical literature.

In \cite{USK} in order to prove some inequalities related to Hadamard's
inequality K\i rmac\i\ used the following lemma:

\begin{lemma}
\label{l1} Let $f:I^{\circ }\subset \mathbb{R}\rightarrow \mathbb{R}$, be a
differentiable mapping on $I^{\circ }$, $a,b\in I^{\circ }$ ($I^{\circ }$ is
the interior of $I$) with $a<b$. If \ $f^{\prime }\in L\left( \left[ a,b%
\right] \right) $, then we have%
\begin{equation*}
\begin{array}{l}
\dfrac{1}{b-a}\dint_{a}^{b}f(x)dx-f\left( \dfrac{a+b}{2}\right) \\ 
\\ 
\ \ \ \ \ =\left( b-a\right) \left[ \dint_{0}^{\frac{1}{2}}tf^{\prime
}(ta+(1-t)b)dt+\dint_{\frac{1}{2}}^{1}\left( t-1\right) f^{\prime
}(ta+(1-t)b)dt\right] .%
\end{array}%
\end{equation*}
\end{lemma}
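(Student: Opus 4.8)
The plan is to verify the identity by working on the right-hand side and reducing it to the left-hand side through integration by parts followed by a change of variables. Write $I_1 = \int_0^{1/2} t\,f'(ta+(1-t)b)\,dt$ and $I_2 = \int_{1/2}^1 (t-1)\,f'(ta+(1-t)b)\,dt$, so that the right-hand side is $(b-a)(I_1+I_2)$. The key observation is that $\frac{d}{dt}f(ta+(1-t)b) = (a-b)f'(ta+(1-t)b)$, so an antiderivative of $f'(ta+(1-t)b)$ is $\frac{1}{a-b}f(ta+(1-t)b)$, which is precisely what makes integration by parts effective here.

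For $I_1$ I would integrate by parts with $u=t$ and $dv = f'(ta+(1-t)b)\,dt$. The factor $t$ in $u$ kills the contribution at $t=0$, leaving the boundary value at $t=\frac12$, namely $\frac{1}{2(a-b)}f\!\left(\frac{a+b}{2}\right)$, minus $\frac{1}{a-b}\int_0^{1/2}f(ta+(1-t)b)\,dt$. For $I_2$ I would do the same with $u=t-1$; now the factor $(t-1)$ kills the contribution at $t=1$, leaving the boundary value at $t=\frac12$, which again produces $\frac{1}{2(a-b)}f\!\left(\frac{a+b}{2}\right)$, minus $\frac{1}{a-b}\int_{1/2}^1 f(ta+(1-t)b)\,dt$.

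Adding $I_1$ and $I_2$, the two half-point boundary terms combine into $\frac{1}{a-b}f\!\left(\frac{a+b}{2}\right)$, and the two remaining integrals merge into a single integral over $[0,1]$, giving
$$I_1+I_2 = \frac{1}{a-b}f\!\left(\frac{a+b}{2}\right) - \frac{1}{a-b}\int_0^1 f(ta+(1-t)b)\,dt.$$
Finally I would multiply through by $(b-a) = -(a-b)$ and apply the substitution $x=ta+(1-t)b$, $dx=(a-b)\,dt$, which maps $[0,1]$ onto $[a,b]$ (with a reversal of orientation whose sign cancels against $a-b$) and converts $\int_0^1 f(ta+(1-t)b)\,dt$ into $\frac{1}{b-a}\int_a^b f(x)\,dx$. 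This yields exactly the claimed expression.

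The calculation is essentially routine; the only place demanding care is sign bookkeeping, since $a<b$ forces $a-b<0$, and the cancellation of the two $t=\frac12$ boundary terms against the factor $(b-a)$ must be tracked precisely. The integrability hypothesis $f^{\prime }\in L([a,b])$ is what guarantees that all the integrals appearing above are well defined, so no further regularity is needed.
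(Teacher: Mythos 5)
Your proof is correct: the integration by parts is set up properly, the boundary terms at $t=0$ and $t=1$ vanish as you say, the two contributions of $\tfrac{1}{2(a-b)}f\bigl(\tfrac{a+b}{2}\bigr)$ at $t=\tfrac12$ add as claimed, and the sign bookkeeping through the substitution $x=ta+(1-t)b$ comes out right. Note that the paper itself gives no proof of this lemma --- it is quoted verbatim from K\i rmac\i 's article \cite{USK} --- so there is nothing to compare against directly; your argument is the standard one and is exactly the integration-by-parts-plus-change-of-variables technique the authors deploy (in a more elaborate double-integral form) in their own Lemma \ref{z}.
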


Also, in \cite{USKMEO}, K\i rmac\i\ and \"{O}zdemir obtained the following
inequality for differeftiable mappings which are connected with
Hermite-Hadamard's inequality:

\begin{theorem}
\label{l2} Let $f:I^{\circ }\subset \mathbb{R}\rightarrow \mathbb{R}$, be a
differentiable mapping on $I^{\circ }$, $a,b\in I^{\circ }$ with $a<b$ and $%
p>1$. If the mapping $\left\vert f^{\prime }\right\vert ^{p}$ is convex on $%
\left[ a,b\right] $, then%
\begin{equation*}
\left\vert \frac{1}{b-a}\int_{a}^{b}f(x)dx-f\left( \frac{a+b}{2}\right)
\right\vert \leq \frac{\left( 3^{1-\frac{1}{q}}\right) }{8}\left( b-a\right)
\left( \left\vert f^{\prime }(a)\right\vert +\left\vert f^{\prime
}(b)\right\vert \right) .
\end{equation*}
\end{theorem}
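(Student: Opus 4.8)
The plan is to start from the identity in Lemma \ref{l1}, which already expresses $\frac{1}{b-a}\int_a^b f(x)\,dx-f\!\left(\frac{a+b}{2}\right)$ as $(b-a)$ times a sum of two weighted integrals of $f'$ over $\left[0,\tfrac12\right]$ and $\left[\tfrac12,1\right]$. Passing to absolute values, using the triangle inequality, and noting that $|t-1|=1-t$ on $\left[\tfrac12,1\right]$ reduces the whole problem to estimating
\[
J:=\int_0^{1/2} t\,\left|f'(ta+(1-t)b)\right|\,dt+\int_{1/2}^1 (1-t)\,\left|f'(ta+(1-t)b)\right|\,dt .
\]
The factor $(b-a)$ then simply rides along, so everything hinges on bounding these two integrals.

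Next I would apply H\"older's inequality with conjugate exponents $p$ and $q$ (so that $\frac1p+\frac1q=1$) to each integral, placing the weight in the $q$-factor and $|f'|$ in the $p$-factor, e.g. $\int_0^{1/2} t\,|f'|\,dt\le\left(\int_0^{1/2} t^{q}\,dt\right)^{1/q}\left(\int_0^{1/2}|f'(ta+(1-t)b)|^{p}\,dt\right)^{1/p}$. The reason for this arrangement is that the hypothesis concerns $|f'|^{p}$, so it is $\int|f'|^{p}$ that I want to control. To keep the constant clean I would estimate the elementary weight integral by $t^{q}\le t$ on $[0,1]$, giving $\left(\int_0^{1/2} t^{q}\,dt\right)^{1/q}\le\left(\int_0^{1/2} t\,dt\right)^{1/q}=\left(\tfrac18\right)^{1/q}$, and the same prefactor $\left(\tfrac18\right)^{1/q}$ for the second piece after the symmetric substitution.

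Then I invoke convexity of $|f'|^{p}$ in the form $|f'(ta+(1-t)b)|^{p}\le t\,|f'(a)|^{p}+(1-t)\,|f'(b)|^{p}$ and integrate. Over $\left[0,\tfrac12\right]$ this yields $\tfrac18|f'(a)|^{p}+\tfrac38|f'(b)|^{p}$, and over $\left[\tfrac12,1\right]$ the symmetric $\tfrac38|f'(a)|^{p}+\tfrac18|f'(b)|^{p}$; this asymmetric $1:3$ split is exactly where the factor $3$ in the target constant originates. Factoring out $\tfrac18$ inside each $p$-th root and combining with the two $\left(\tfrac18\right)^{1/q}$ prefactors (using $\frac1q+\frac1p=1$, so that $\left(\tfrac18\right)^{1/q}\left(\tfrac18\right)^{1/p}=\tfrac18$) should give a bound of the shape $J\le\tfrac18\!\left[\left(|f'(a)|^{p}+3|f'(b)|^{p}\right)^{1/p}+\left(3|f'(a)|^{p}+|f'(b)|^{p}\right)^{1/p}\right]$.

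Finally I would consolidate the two bracketed terms into a single multiple of $|f'(a)|+|f'(b)|$, applying the elementary subadditivity $(u+v)^{1/p}\le u^{1/p}+v^{1/p}$ (valid for $p>1$ since $s\mapsto s^{1/p}$ is concave with value $0$ at the origin) and collecting coefficients so as to produce the factor $3^{1-1/q}$; together with the $\tfrac18$ already extracted and the restored $(b-a)$, this is meant to deliver the stated constant $\frac{3^{1-1/q}}{8}(b-a)$. I expect this last consolidation, together with the exponent bookkeeping, to be the main obstacle: the analytic inputs (H\"older and convexity) are routine, but the care lies entirely in aligning the weight integral, the convexity integral, and the conjugacy $\frac1p+\frac1q=1$ so that the prefactors telescope to the single clean constant $\frac{3^{1-1/q}}{8}$ rather than leaving behind stray factors of $(p+1)$ or an extra symmetric term.
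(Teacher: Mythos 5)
First, a remark on the comparison you asked for: the paper contains \emph{no} proof of this statement. It is quoted as background from \cite{USKMEO} (and, as quoted, it never even defines $q$; one has to guess that $\tfrac1p+\tfrac1q=1$). So your attempt cannot be matched against an in-paper argument and must stand on its own.

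Your argument is correct up to and including the H\"{o}lder step: Lemma \ref{l1}, the triangle inequality, H\"{o}lder with the weight in the $q$-factor, the crude estimate $t^{q}\le t$ on $[0,\tfrac12]$, and convexity of $\left\vert f^{\prime }\right\vert ^{p}$ do give, with $A=\left\vert f^{\prime }(a)\right\vert$ and $B=\left\vert f^{\prime }(b)\right\vert$,
\begin{equation*}
J\le \frac{1}{8}\left[ \left( A^{p}+3B^{p}\right) ^{1/p}+\left(
3A^{p}+B^{p}\right) ^{1/p}\right] ,
\end{equation*}
since $\left( \tfrac18\right) ^{1/q}\left( \tfrac18\right) ^{1/p}=\tfrac18$. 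The proof breaks at exactly the consolidation step you flagged as the main obstacle. Since $3^{1-1/q}=3^{1/p}$, you would need
\begin{equation*}
\left( A^{p}+3B^{p}\right) ^{1/p}+\left( 3A^{p}+B^{p}\right) ^{1/p}\le
3^{1/p}\left( A+B\right) ,
\end{equation*}
and this is false: for $A=B=1$ and $p=2$ the left side equals $4$ while the right side is $2\sqrt{3}\approx 3.46$. Subadditivity yields only $\left( A^{p}+3B^{p}\right) ^{1/p}\le A+3^{1/p}B$ and its mirror image, so your chain terminates at the weaker bound $\frac{1+3^{1/p}}{8}\left( b-a\right) \left( A+B\right)$, which exceeds the claimed $\frac{3^{1/p}}{8}\left( b-a\right) \left( A+B\right)$ by the additive term $\frac{b-a}{8}\left( A+B\right)$; the alternative route $\left( A^{p}+3B^{p}\right) ^{1/p}\le 3^{1/p}\left( A^{p}+B^{p}\right) ^{1/p}\le 3^{1/p}\left( A+B\right)$ is worse still, giving $\frac{2\cdot 3^{1/p}}{8}$. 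No reshuffling of these elementary estimates produces the stated constant. The likely explanation is that the theorem is an imprecise transcription of the result in \cite{USKMEO} (whose hypothesis and constant are organized differently), so you should not expect your otherwise reasonable estimates to land on $\frac{3^{1-1/q}}{8}$; but as a proof of the statement as written, the final step is a genuine gap, not mere bookkeeping.
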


In this article, using functions whose derivatives absolute values are
convex, we obtained new inequalities releted to the left side of
Hermite-Hadamard inequality. Finally, we gave some applications for special
means of real numbers.

\section{Main Results}

We start with the following lemma:

\begin{lemma}
\label{z} Let $f:I^{\circ }\subset \mathbb{R}\rightarrow \mathbb{R}$ be a
differentiable mapping on $I^{\circ }$, $a,b\in I^{\circ }$ with $a<b$. If\ $%
f^{\prime }\in L[a,b]$, then the following equality holds:%
\begin{equation}
\begin{array}{l}
f(\dfrac{a+b}{2})-\dfrac{1}{b-a}\dint_{a}^{b}f(x)dx \\ 
\\ 
\ \ \ \ \ \ \ \ \ \ =\dfrac{b-a}{2}\dint_{0}^{1}\dint_{0}^{1}\left(
f^{\prime }(ta+(1-t)b)-f^{\prime }(sa+(1-s)b)\right) \left( m\left( s\right)
-m\left( t\right) \right) dtds.%
\end{array}
\label{10}
\end{equation}%
with%
\begin{equation*}
m(.):=\left\{ 
\begin{array}{ll}
t & ,t\in \lbrack 0,\frac{1}{2}] \\ 
&  \\ 
t-1 & ,t\in (\frac{1}{2},1]%
\end{array}%
\right.
\end{equation*}
\end{lemma}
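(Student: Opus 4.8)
The plan is to reduce the claimed identity to the single-integral formula of Lemma \ref{l1}. The first observation is that the piecewise definition of $m$ lets us package the bracket in Lemma \ref{l1} compactly: since $m(t)=t$ on $[0,\frac12]$ and $m(t)=t-1$ on $(\frac12,1]$, that lemma reads
\begin{equation*}
\frac{1}{b-a}\int_a^b f(x)\,dx-f\left(\frac{a+b}{2}\right)=(b-a)\int_0^1 m(t)\,f'(ta+(1-t)b)\,dt.
\end{equation*}
Writing $I:=\int_0^1 m(t)\,f'(ta+(1-t)b)\,dt$, the left-hand side of the target identity equals $-(b-a)I$, so it suffices to show that the double integral on the right evaluates to $-2I$; then multiplying by $\frac{b-a}{2}$ returns $-(b-a)I$ and matches the left-hand side.

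To that end I would expand the integrand $\bigl(f'(ta+(1-t)b)-f'(sa+(1-s)b)\bigr)\bigl(m(s)-m(t)\bigr)$ into its four products and integrate each over the unit square, using Fubini to separate the $t$- and $s$-dependence. The two ``diagonal'' terms, those pairing $f'(ta+(1-t)b)$ with $m(t)$ and $f'(sa+(1-s)b)$ with $m(s)$, each reduce after the trivial integration in the free variable to exactly $I$, and since both carry a minus sign they contribute $-2I$ in total. The two remaining ``cross'' terms each factor as a product of a single integral of $f'$ against $\int_0^1 m$, so the whole argument hinges on evaluating that last integral.

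The key step, and the only computation that is not purely formal, is therefore the vanishing of $\int_0^1 m$. A direct evaluation gives
\begin{equation*}
\int_0^1 m(u)\,du=\int_0^{1/2}u\,du+\int_{1/2}^1 (u-1)\,du=\frac{1}{8}-\frac{1}{8}=0,
\end{equation*}
so both cross terms are zero and the double integral collapses to $-2I$, completing the proof. The main subtlety to watch is keeping the signs straight when expanding the antisymmetric product $(g(t)-g(s))(m(s)-m(t))$; it is precisely the balance $\int_0^1 m=0$ that makes the otherwise messy four-term expansion reduce to the clean single-integral form of Lemma \ref{l1}. (If one preferred to avoid invoking Lemma \ref{l1}, the same identity could be obtained directly by integration by parts on each half-interval $[0,\frac12]$ and $[\frac12,1]$, but the reduction above is considerably shorter.)
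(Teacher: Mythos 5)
Your proof is correct, but it takes a genuinely different and considerably shorter route than the paper's. The paper proves the identity from scratch: it splits the double integral over the unit square into eight pieces $I_{1},\dots ,I_{8}$ according to whether $t$ and $s$ lie in $[0,\frac{1}{2}]$ or $(\frac{1}{2},1]$, evaluates each by integration by parts, and adds the results. You instead set $g(t)=f^{\prime }(ta+(1-t)b)$, expand the antisymmetric product $\left( g(t)-g(s)\right) \left( m(s)-m(t)\right) $, observe that the two cross terms factor through $\int_{0}^{1}m(u)\,du=0$ and hence vanish, while the two diagonal terms each reproduce the single integral $I=\int_{0}^{1}m(t)g(t)\,dt$ of Lemma \ref{l1}; the double integral therefore collapses to $-2I=\frac{2}{b-a}\bigl( f(\frac{a+b}{2})-\frac{1}{b-a}\int_{a}^{b}f(x)\,dx\bigr) $, and multiplying by $\frac{b-a}{2}$ gives the claim. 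The computations all check out, including the sign bookkeeping and the value $\int_{0}^{1}m=0$. What your approach buys is brevity and transparency: the identity is revealed as a formal consequence of Lemma \ref{l1} together with the mean-zero property of the kernel $m$, with no integration by parts beyond what is already inside Lemma \ref{l1}. What it costs is self-containedness: you lean on Lemma \ref{l1}, which the paper only quotes from K\i rmac\i\ without proof, whereas the eight-fold computation in the paper is independent of it. Since Lemma \ref{l1} is explicitly stated in the introduction and is itself an elementary integration by parts, this reliance is legitimate, and your argument would serve as a valid (and cleaner) replacement for the paper's proof.
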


\begin{proof}
By definitions of $m(.),$ it follows that%
\begin{equation*}
\begin{array}{l}
\dint_{0}^{1}\dint_{0}^{1}\left( f^{\prime }(ta+(1-t)b)-f^{\prime
}(sa+(1-s)b)\right) \left( m\left( t\right) -m\left( s\right) \right) dtds
\\ 
\\ 
\ \ \ =\dint_{0}^{1}\left\{ \dint_{0}^{1}f^{\prime }(ta+(1-t)b)\left(
m\left( t\right) -m\left( s\right) \right) dt-\dint_{0}^{1}f^{\prime
}(sa+(1-s)b)\left( m\left( t\right) -m\left( s\right) \right) dt\right\} ds
\\ 
\\ 
\ \ \ =\dint_{0}^{1}\left\{ \dint_{0}^{1/2}f^{\prime }(ta+(1-t)b)\left(
t-m\left( s\right) \right) dt+\dint_{1/2}^{1}f^{\prime }(ta+(1-t)b)\left(
t-1-m\left( s\right) \right) dt\right\} ds \\ 
\\ 
\ \ \ -\dint_{0}^{1}\left\{ \dint_{0}^{1/2}f^{\prime }(sa+(1-s)b)\left(
t-m\left( s\right) \right) dtdt+\dint_{1/2}^{1}f^{\prime }(sa+(1-s)b)\left(
t-1-m\left( s\right) \right) dt\right\} ds \\ 
\\ 
\ \ \ =\dint_{0}^{1/2}\left\{ \dint_{0}^{1/2}f^{\prime }(ta+(1-t)b)\left(
t-s\right) dt\right\} ds+\dint_{1/2}^{1}\left\{ \dint_{0}^{1/2}f^{\prime
}(ta+(1-t)b)\left( t-s+1\right) dt\right\} ds \\ 
\\ 
\ \ \ +\dint_{0}^{1/2}\left\{ \dint_{1/2}^{1}f^{\prime }(ta+(1-t)b)\left(
t-s-1\right) dt\right\} ds+\dint_{1/2}^{1}\left\{ \dint_{1/2}^{1}f^{\prime
}(ta+(1-t)b)\left( t-s\right) dt\right\} ds%
\end{array}%
\end{equation*}%
\begin{equation*}
\begin{array}{l}
\ \ \ -\dint_{0}^{1/2}\left\{ \dint_{0}^{1/2}f^{\prime }(sa+(1-s)b)\left(
t-s\right) dt\right\} ds-\dint_{1/2}^{1}\left\{ \dint_{0}^{1/2}f^{\prime
}(sa+(1-s)b)\left( t-s+1\right) dt\right\} ds \\ 
\\ 
\ \ \ -\dint_{0}^{1/2}\left\{ \dint_{1/2}^{1}f^{\prime }(sa+(1-s)b)\left(
t-s-1\right) dt\right\} ds-\dint_{1/2}^{1}\left\{ \dint_{1/2}^{1}f^{\prime
}(sa+(1-s)b)\left( t-s\right) dt\right\} ds \\ 
\\ 
\ \ \ =I_{1}+I_{2}+I_{3}+I_{4}-I_{5}-I_{6}-I_{7}-I_{8}.%
\end{array}%
\end{equation*}%
Thus by integration by parts, we can state:%
\begin{equation}
\begin{array}{l}
I_{1}=\dint_{0}^{1/2}\left\{ \dint_{0}^{1/2}f^{\prime }(ta+(1-t)b)\left(
t-s\right) dt\right\} ds \\ 
\\ 
\text{ \ \ }=\dint_{0}^{1/2}\left\{ \left( t-s\right) \dfrac{f(ta+(1-t)b)}{%
a-b}\underset{0}{\overset{1/2}{\mid }}-\dfrac{1}{\left( a-b\right) }%
\dint_{0}^{1/2}f(ta+(1-t)b)dt\right\} ds \\ 
\\ 
\text{ \ \ }=\dint_{0}^{1/2}\left\{ \left( \frac{1}{2}-s\right) \dfrac{f(%
\frac{a+b}{2})}{a-b}+s\dfrac{f(b)}{a-b}\right\} ds-\dfrac{1}{2\left(
a-b\right) }\dint_{0}^{1/2}f(ta+(1-t)b)dt \\ 
\\ 
\text{ \ \ }=\left( \left( \dfrac{s}{2}-\dfrac{s^{2}}{2}\right) \dfrac{f(%
\frac{a+b}{2})}{a-b}+\dfrac{s^{2}}{2}\dfrac{f(b)}{a-b}\right) \underset{0}{%
\overset{1/2}{\mid }}-\dfrac{1}{2\left( a-b\right) }%
\dint_{0}^{1/2}f(ta+(1-t)b)dt \\ 
\\ 
\text{ \ \ }=\dfrac{1}{8}\dfrac{f(\frac{a+b}{2})}{a-b}+\dfrac{1}{8}\dfrac{%
f(b)}{a-b}-\dfrac{1}{2\left( a-b\right) }\dint_{0}^{1/2}f(ta+(1-t)b)dt,%
\end{array}
\label{1}
\end{equation}%
\begin{equation}
\begin{array}{l}
I_{2}=\dint_{1/2}^{1}\left\{ \dint_{0}^{1/2}f^{\prime }(ta+(1-t)b)\left(
t-s+1\right) dt\right\} ds \\ 
\\ 
\text{ \ \ }=\dint_{1/2}^{1}\left\{ \left( t-s+1\right) \dfrac{f(ta+(1-t)b)}{%
a-b}\underset{0}{\overset{1/2}{\mid }}-\dfrac{1}{\left( a-b\right) }%
\dint_{0}^{1/2}f(ta+(1-t)b)dt\right\} ds \\ 
\\ 
\text{ \ \ }=\dint_{1/2}^{1}\left\{ \left( \dfrac{3}{2}-s\right) \dfrac{f(%
\frac{a+b}{2})}{a-b}+\left( s-1\right) \dfrac{f(b)}{a-b}\right\} ds-\dfrac{1%
}{2\left( a-b\right) }\dint_{0}^{1/2}f(ta+(1-t)b)dt \\ 
\\ 
\text{ \ \ }=\left( \left( \dfrac{3s}{2}-\dfrac{s^{2}}{2}\right) \dfrac{f(%
\frac{a+b}{2})}{a-b}+\left( \dfrac{s^{2}}{2}-s\right) \dfrac{f(b)}{a-b}%
\right) \underset{1/2}{\overset{1}{\mid }}-\dfrac{1}{2\left( a-b\right) }%
\dint_{0}^{1/2}f(ta+(1-t)b)dt \\ 
\\ 
\text{ \ \ }=\dfrac{3}{8}\dfrac{f(\frac{a+b}{2})}{a-b}-\dfrac{1}{8}\dfrac{%
f(b)}{a-b}-\dfrac{1}{2\left( a-b\right) }\dint_{0}^{1/2}f(ta+(1-t)b)dt,%
\end{array}
\label{2}
\end{equation}%
\begin{equation}
\begin{array}{l}
I_{3}=\dint_{0}^{1/2}\left\{ \dint_{1/2}^{1}f^{\prime }(ta+(1-t)b)\left(
t-s-1\right) dt\right\} ds \\ 
\text{ \ \ }=\dint_{0}^{1/2}\left\{ \left( t-s-1\right) \dfrac{f(ta+(1-t)b)}{%
a-b}\underset{1/2}{\overset{1}{\mid }}-\dfrac{1}{\left( a-b\right) }%
\dint_{1/2}^{1}f(ta+(1-t)b)dt\right\} ds \\ 
\\ 
\text{ \ \ }=\dint_{0}^{1/2}\left\{ \left( s+\frac{1}{2}\right) \dfrac{f(%
\frac{a+b}{2})}{a-b}-s\dfrac{f(a)}{a-b}\right\} ds-\dfrac{1}{2\left(
a-b\right) }\dint_{1/2}^{1}f(ta+(1-t)b)dt \\ 
\\ 
\text{ \ \ }=\left( \left( \dfrac{s^{2}}{2}+\dfrac{s}{2}\right) \dfrac{f(%
\frac{a+b}{2})}{a-b}-\dfrac{s^{2}}{2}\dfrac{f(a)}{a-b}\right) \underset{0}{%
\overset{1/2}{\mid }}-\dfrac{1}{2\left( a-b\right) }%
\dint_{1/2}^{1}f(ta+(1-t)b)dt \\ 
\\ 
\text{ \ \ }=\dfrac{3}{8}\dfrac{f(\frac{a+b}{2})}{a-b}-\dfrac{1}{8}\dfrac{%
f(a)}{a-b}-\dfrac{1}{2\left( a-b\right) }\dint_{1/2}^{1}f(ta+(1-t)b)dt,%
\end{array}
\label{3}
\end{equation}%
\begin{equation}
\begin{array}{l}
I_{4}=\dint_{1/2}^{1}\left\{ \dint_{1/2}^{1}f^{\prime }(ta+(1-t)b)\left(
t-s\right) dt\right\} ds \\ 
\text{ \ \ }=\dint_{1/2}^{1}\left\{ \left( t-s\right) \dfrac{f(ta+(1-t)b)}{%
a-b}\underset{1/2}{\overset{1}{\mid }}-\dfrac{1}{\left( a-b\right) }%
\dint_{1/2}^{1}f(ta+(1-t)b)dt\right\} ds \\ 
\\ 
\text{ \ \ }=\dint_{1/2}^{1}\left\{ \left( s-\frac{1}{2}\right) \dfrac{f(%
\frac{a+b}{2})}{a-b}+\left( 1-s\right) \dfrac{f(a)}{a-b}\right\} ds-\dfrac{1%
}{2\left( a-b\right) }\dint_{1/2}^{1}f(ta+(1-t)b)dt \\ 
\\ 
\text{ \ \ }=\left( \left( \dfrac{s^{2}}{2}-\dfrac{s}{2}\right) \dfrac{f(%
\frac{a+b}{2})}{a-b}+\left( s-\dfrac{s^{2}}{2}\right) \dfrac{f(a)}{a-b}%
\right) \underset{1/2}{\overset{1}{\mid }}-\dfrac{1}{2\left( a-b\right) }%
\dint_{1/2}^{1}f(ta+(1-t)b)dt \\ 
\\ 
\text{ \ \ }=\dfrac{1}{8}\dfrac{f(\frac{a+b}{2})}{a-b}+\dfrac{1}{8}\dfrac{%
f(a)}{a-b}-\dfrac{1}{2\left( a-b\right) }\dint_{1/2}^{1}f(ta+(1-t)b)dt,%
\end{array}
\label{4}
\end{equation}%
\begin{equation}
\begin{array}{l}
I_{5}=\dint_{0}^{1/2}\left\{ \dint_{0}^{1/2}f^{\prime }(sa+(1-s)b)\left(
t-s\right) dt\right\} ds \\ 
\text{ \ \ }=\dint_{0}^{1/2}\left\{ \left( \frac{t^{2}}{2}-st\right)
f^{\prime }(sa+(1-s)b)\underset{0}{\overset{1/2}{\mid }}\right\} ds \\ 
\\ 
\text{ \ \ }=\dint_{0}^{1/2}\left( \dfrac{1}{8}-\dfrac{s}{2}\right)
f^{\prime }(sa+(1-s)b)ds \\ 
\\ 
\text{ \ \ }=\left( \dfrac{1}{8}-\dfrac{s}{2}\right) \dfrac{f(sa+(1-s)b))}{%
a-b}\underset{0}{\overset{1/2}{\mid }}+\dfrac{1}{2\left( a-b\right) }%
\dint_{0}^{1/2}f(sa+(1-s)b)ds \\ 
\\ 
\text{ \ \ }=-\dfrac{1}{8}\dfrac{f(\frac{a+b}{2})}{a-b}-\dfrac{1}{8}\dfrac{%
f(b)}{a-b}+\dfrac{1}{2\left( a-b\right) }\dint_{0}^{1/2}f(sa+(1-s)b)ds,%
\end{array}
\label{5}
\end{equation}%
\begin{equation}
\begin{array}{l}
I_{6}=\dint_{1/2}^{1}\left\{ \dint_{0}^{1/2}f^{\prime }(sa+(1-s)b)\left(
t-s+1\right) dt\right\} ds \\ 
\text{ \ \ }=\dint_{1/2}^{1}\left\{ \left( \dfrac{t^{2}}{2}-st+t\right)
f^{\prime }(sa+(1-s)b)\underset{0}{\overset{1/2}{\mid }}\right\} ds \\ 
\\ 
\text{ \ \ }=\dint_{1/2}^{1}\left( \dfrac{5}{8}-\dfrac{s}{2}\right)
f^{\prime }(sa+(1-s)b)ds \\ 
\\ 
\text{ \ \ }=\left( \dfrac{5}{8}-\dfrac{s}{2}\right) \dfrac{f(sa+(1-s)b))}{%
a-b}\underset{1/2}{\overset{1}{\mid }}+\dfrac{1}{2\left( a-b\right) }%
\dint_{1/2}^{1}f(sa+(1-s)b)ds \\ 
\\ 
\text{ \ \ }=-\dfrac{3}{8}\dfrac{f(\frac{a+b}{2})}{a-b}+\dfrac{1}{8}\dfrac{%
f(a)}{a-b}+\dfrac{1}{2\left( a-b\right) }\dint_{1/2}^{1}f(sa+(1-s)b)ds,%
\end{array}
\label{6}
\end{equation}%
\begin{equation}
\begin{array}{l}
I_{7}=\dint_{0}^{1/2}\left\{ \dint_{1/2}^{1}f^{\prime }(sa+(1-s)b)\left(
t-s-1\right) dt\right\} ds \\ 
\text{ \ \ }=\dint_{0}^{1/2}\left\{ \left( \dfrac{t^{2}}{2}-st-t\right)
f^{\prime }(sa+(1-s)b)\underset{1/2}{\overset{1}{\mid }}\right\} ds \\ 
\\ 
\text{ \ \ }=\dint_{0}^{1/2}\left( -\dfrac{1}{8}-\dfrac{s}{2}\right)
f^{\prime }(sa+(1-s)b)ds \\ 
\\ 
\text{ \ \ }=\left( -\dfrac{1}{8}-\dfrac{s}{2}\right) \dfrac{f(sa+(1-s)b))}{%
a-b}\underset{0}{\overset{1/2}{\mid }}+\dfrac{1}{2\left( a-b\right) }%
\dint_{0}^{1/2}f(sa+(1-s)b)ds \\ 
\\ 
\text{ \ \ }=-\dfrac{3}{8}\dfrac{f(\frac{a+b}{2})}{a-b}+\dfrac{1}{8}\dfrac{%
f(b)}{a-b}+\dfrac{1}{2\left( a-b\right) }\dint_{0}^{1/2}f(sa+(1-s)b)ds,%
\end{array}
\label{7}
\end{equation}%
\begin{equation}
\begin{array}{l}
I_{8}=\dint_{1/2}^{1}\left\{ \dint_{1/2}^{1}f^{\prime }(sa+(1-s)b)\left(
t-s\right) dt\right\} ds \\ 
\text{ \ \ }=\dint_{1/2}^{1}\left\{ \left( \dfrac{t^{2}}{2}-st\right)
f^{\prime }(sa+(1-s)b)\underset{1/2}{\overset{1}{\mid }}\right\} ds \\ 
\\ 
\text{ \ \ }=\dint_{1/2}^{1}\left( \dfrac{3}{8}-\dfrac{s}{2}\right)
f^{\prime }(sa+(1-s)b)ds \\ 
\\ 
\text{ \ \ }=\left( \dfrac{3}{8}-\dfrac{s}{2}\right) \dfrac{f(sa+(1-s)b))}{%
a-b}\underset{1/2}{\overset{1}{\mid }}+\dfrac{1}{2\left( a-b\right) }%
\dint_{1/2}^{1}f(sa+(1-s)b)ds \\ 
\\ 
\text{ \ \ }=-\dfrac{1}{8}\dfrac{f(\frac{a+b}{2})}{a-b}-\dfrac{1}{8}\dfrac{%
f(a)}{a-b}+\dfrac{1}{2\left( a-b\right) }\dint_{1/2}^{1}f(ta+(1-t)b)dt.%
\end{array}
\label{8}
\end{equation}%
Adding (\ref{1})-(\ref{8}) and rewritting, we easily deduce: 
\begin{equation*}
\begin{array}{l}
\dint_{0}^{1}\dint_{0}^{1}\left( f^{\prime }(ta+(1-t)b)-f^{\prime
}(sa+(1-s)b)\right) \left( m\left( t\right) -m\left( s\right) \right) dtds
\\ 
\\ 
\text{ \ \ }=I_{1}+I_{2}+I_{3}+I_{4}-I_{5}-I_{6}-I_{7}-I_{8} \\ 
\\ 
\text{ \ \ }=2\dfrac{f(\frac{a+b}{2})}{a-b}-\dfrac{2}{\left( a-b\right) }%
\dint_{0}^{1}f(ta+(1-t)b)dt.%
\end{array}%
\end{equation*}%
Using the change of the variable $x=ta+(1-t)b$ for $t\in \lbrack 0,1]$, and
multiplying the both sides by $\left( a-b\right) /2,$ we obtain (\ref{10}),
which completes the proof.
\end{proof}

\begin{theorem}
\label{z2} Let $f:I^{\circ }\subset \mathbb{R}\rightarrow \mathbb{R}$ be a
differentiable mapping on $I^{\circ }$, $a,b\in I^{\circ }$ with $a<b$. If \ 
$\left\vert f^{\prime }\right\vert ^{2}$ is convex on $\left[ a,b\right] ,$
then the following inequality holds:%
\begin{equation}
\left\vert f(\frac{a+b}{2})-\dfrac{1}{b-a}\dint_{a}^{b}f(x)dx\right\vert
\leq \frac{b-a}{\sqrt{6}}\left( \frac{\left\vert f^{\prime }(a)\right\vert
^{2}+\left\vert f^{\prime }(b)\right\vert ^{2}}{2}\right) ^{\frac{1}{2}}.
\label{9}
\end{equation}
\end{theorem}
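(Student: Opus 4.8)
The plan is to start from the integral identity in Lemma \ref{z} and estimate its right-hand side by combining the triangle inequality, a symmetrization of the double integral, the Cauchy--Schwarz inequality, and finally the convexity hypothesis on $\left\vert f^{\prime }\right\vert ^{2}$. Writing $g(t):=f^{\prime }(ta+(1-t)b)$ for brevity, Lemma \ref{z} together with the triangle inequality gives
\[
\left\vert f(\tfrac{a+b}{2})-\tfrac{1}{b-a}\dint_{a}^{b}f(x)\,dx\right\vert \leq \frac{b-a}{2}\dint_{0}^{1}\dint_{0}^{1}\left\vert g(t)-g(s)\right\vert \left\vert m(s)-m(t)\right\vert \,dt\,ds,
\]
so the task reduces to bounding this last double integral.

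First I would apply the triangle inequality $\left\vert g(t)-g(s)\right\vert \leq \left\vert g(t)\right\vert +\left\vert g(s)\right\vert$. Since the domain $[0,1]^{2}$ and the weight $\left\vert m(s)-m(t)\right\vert$ are symmetric under the interchange $t\leftrightarrow s$, the two resulting pieces are equal, and the double integral is bounded by $2\dint_{0}^{1}\dint_{0}^{1}\left\vert g(t)\right\vert \left\vert m(s)-m(t)\right\vert \,dt\,ds$. To this I would apply Cauchy--Schwarz on $[0,1]^{2}$, separating the factor $\left\vert g(t)\right\vert$ from the weight:
\[
\dint_{0}^{1}\dint_{0}^{1}\left\vert g(t)\right\vert \left\vert m(s)-m(t)\right\vert \,dt\,ds\leq \left( \dint_{0}^{1}\dint_{0}^{1}\left\vert g(t)\right\vert ^{2}\,dt\,ds\right) ^{1/2}\left( \dint_{0}^{1}\dint_{0}^{1}(m(s)-m(t))^{2}\,dt\,ds\right) ^{1/2}.
\]

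The two remaining integrals are then explicit. The first collapses to $\dint_{0}^{1}\left\vert g(t)\right\vert ^{2}\,dt$, since integrating in $s$ contributes a factor $1$; convexity of $\left\vert f^{\prime }\right\vert ^{2}$ yields $\left\vert g(t)\right\vert ^{2}\leq t\left\vert f^{\prime }(a)\right\vert ^{2}+(1-t)\left\vert f^{\prime }(b)\right\vert ^{2}$, whose integral over $[0,1]$ is $\tfrac{1}{2}(\left\vert f^{\prime }(a)\right\vert ^{2}+\left\vert f^{\prime }(b)\right\vert ^{2})$. The second integral is a pure constant: expanding the square and using $\dint_{0}^{1}m=0$ and $\dint_{0}^{1}m^{2}=\tfrac{1}{12}$ (each obtained by splitting at $t=\tfrac{1}{2}$) makes the cross term vanish and gives $\dint_{0}^{1}\dint_{0}^{1}(m(s)-m(t))^{2}\,dt\,ds=\tfrac{1}{6}$. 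Substituting these, and multiplying by the factor $2$ from the symmetrization and by $\tfrac{b-a}{2}$ from Lemma \ref{z}, the numerical constants combine to exactly $\tfrac{b-a}{\sqrt{6}}$, which is \eqref{9}.

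I expect the only genuine bookkeeping to be the evaluation of the kernel constant $\dint_{0}^{1}\dint_{0}^{1}(m(s)-m(t))^{2}\,dt\,ds$, because $m$ is piecewise and one must verify that the cross term integrates to zero; everything else is a mechanical combination. It is worth noting that applying Cauchy--Schwarz directly to $\left\vert g(t)-g(s)\right\vert \left\vert m(s)-m(t)\right\vert$, without first passing through the triangle inequality, would produce the sharper constant $\tfrac{b-a}{2\sqrt{3}}$; thus the symmetrization step is precisely where the stated, slightly weaker constant $\tfrac{b-a}{\sqrt{6}}$ is introduced.
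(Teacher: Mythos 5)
Your argument is correct and follows essentially the same route as the paper's proof: triangle inequality, symmetrization to reduce to $2\dint_{0}^{1}\dint_{0}^{1}\left\vert f^{\prime}(ta+(1-t)b)\right\vert\left\vert m(t)-m(s)\right\vert\,dt\,ds$, Cauchy--Schwarz, the kernel constant $\tfrac{1}{6}$, and convexity of $\left\vert f^{\prime}\right\vert^{2}$. The only (harmless) deviation is that you evaluate $\dint_{0}^{1}\dint_{0}^{1}(m(s)-m(t))^{2}\,dt\,ds$ by expanding the square and using $\dint_{0}^{1}m=0$, $\dint_{0}^{1}m^{2}=\tfrac{1}{12}$, which is cleaner than the paper's four-region computation; your closing observation that skipping the symmetrization yields the sharper constant $\tfrac{b-a}{2\sqrt{3}}$ is also accurate.
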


\begin{proof}
From Lemma \ref{z}, using the Cauchy-Schwartz for double integrals, we get%
\begin{equation}
\begin{array}{l}
\left\vert f(\dfrac{a+b}{2})-\dfrac{1}{b-a}\dint_{a}^{b}f(x)dx\right\vert \\ 
\\ 
\ \ \ \ \ =\dfrac{b-a}{2}\left\vert \dint_{0}^{1}\dint_{0}^{1}\left(
f^{\prime }(ta+(1-t)b)-f^{\prime }(sa+(1-s)b)\right) \left( m\left( s\right)
-m\left( t\right) \right) dtds\right\vert \\ 
\\ 
\ \ \ \ \ \leq \dfrac{b-a}{2}\left[ \dint_{0}^{1}\dint_{0}^{1}\left\vert
f^{\prime }(ta+(1-t)b)-f^{\prime }(sa+(1-s)b)\right\vert \left\vert m\left(
t\right) -m\left( s\right) \right\vert dtds\right] \\ 
\\ 
\ \ \ \ \ \leq \dfrac{b-a}{2}\left[ \dint_{0}^{1}\dint_{0}^{1}\left\vert
f^{\prime }(ta+(1-t)b)\right\vert \left\vert m\left( t\right) -m\left(
s\right) \right\vert dtds+\dint_{0}^{1}\dint_{0}^{1}\left\vert f^{\prime
}(sa+(1-s)b)\right\vert \left\vert m\left( t\right) -m\left( s\right)
\right\vert dtds\right] \\ 
\\ 
\ \ \ \ \ =\left( b-a\right) \dint_{0}^{1}\dint_{0}^{1}\left\vert f^{\prime
}(ta+(1-t)b)\right\vert \left\vert m\left( t\right) -m\left( s\right)
\right\vert dtds \\ 
\\ 
\ \ \ \ \ \leq \left( b-a\right) \left[ \left(
\dint_{0}^{1}\dint_{0}^{1}\left( m\left( t\right) -m\left( s\right) \right)
^{2}dtds\right) ^{\frac{1}{2}}\left( \dint_{0}^{1}\dint_{0}^{1}\left\vert
f^{\prime }(ta+(1-t)b)\right\vert ^{2}dtds\right) ^{\frac{1}{2}}\right]%
\end{array}
\label{11}
\end{equation}%
By definitions of $m(t)$ and $m(s)$ and by simple computation, we get%
\begin{equation}
\begin{array}{l}
\dint_{0}^{1}\dint_{0}^{1}\left( m\left( t\right) -m\left( s\right) \right)
^{2}dtds \\ 
\\ 
\text{ \ \ }=\dint_{0}^{1}\left\{ \dint_{0}^{1/2}\left( t-m\left( s\right)
\right) ^{2}dt+\dint_{1/2}^{1}\left( t-1-m\left( s\right) \right)
^{2}dt\right\} ds \\ 
\\ 
\text{ \ \ }=\dint_{0}^{1/2}\left\{ \dint_{0}^{1/2}\left( t-s\right)
^{2}dt\right\} ds+\dint_{1/2}^{1}\left\{ \dint_{0}^{1/2}\left( t-s+1\right)
^{2}dt\right\} ds \\ 
\\ 
\text{ \ \ }+\dint_{0}^{1/2}\left\{ \dint_{1/2}^{1}\left( t-s-1\right)
^{2}dt\right\} ds+\dint_{1/2}^{1}\left\{ \dint_{1/2}^{1}\left( t-s\right)
^{2}dt\right\} ds \\ 
\\ 
\text{ \ \ }=\dint_{0}^{1/2}\left\{ \dfrac{\left( t-s\right) ^{3}}{3}%
\underset{0}{\overset{1/2}{\mid }}\right\} ds+\dint_{1/2}^{1}\left\{ \dfrac{%
\left( t-s+1\right) ^{3}}{3}\underset{0}{\overset{1/2}{\mid }}\right\} ds \\ 
\\ 
\text{ \ \ }+\dint_{0}^{1/2}\left\{ \dfrac{\left( t-s-1\right) ^{3}}{3}%
\underset{1/2}{\overset{1}{\mid }}\right\} ds+\dint_{1/2}^{1}\left\{ \dfrac{%
\left( t-s\right) ^{3}}{3}\underset{1/2}{\overset{1}{\mid }}\right\} ds \\ 
\\ 
\text{ \ \ }=\dint_{0}^{1/2}\left\{ \dfrac{\left( 1-2s\right) ^{3}}{24}+%
\dfrac{s^{3}}{3}\right\} ds+\dint_{1/2}^{1}\left\{ \dfrac{\left( 3-2s\right)
^{3}}{24}+\dfrac{\left( s-1\right) ^{3}}{3}\right\} ds \\ 
\\ 
\text{ \ \ }+\dint_{0}^{1/2}\left\{ \dfrac{\left( 2s+1\right) ^{3}}{24}-%
\dfrac{s^{3}}{3}\right\} ds+\dint_{1/2}^{1}\left\{ \dfrac{\left( 2s-1\right)
^{3}}{3}+\dfrac{\left( 1-s\right) ^{3}}{3}\right\} ds \\ 
\\ 
\ \ \ =\dfrac{1}{6}%
\end{array}
\label{12}
\end{equation}%
and since $\left\vert f^{\prime }\right\vert ^{2}$ is convex on $\left[ a,b%
\right] ,$ we know that for $t\in \lbrack 0,1]$%
\begin{equation*}
\left\vert f^{\prime }(ta+(1-t)b)\right\vert ^{2}\leq t\left\vert f^{\prime
}(a)\right\vert ^{2}+(1-t)\left\vert f^{\prime }(b)\right\vert ^{2},
\end{equation*}%
hence%
\begin{equation}
\begin{array}{ll}
\left( \dint_{0}^{1}\dint_{0}^{1}\left\vert f^{\prime
}(ta+(1-t)b)\right\vert ^{2}dtds\right) ^{\frac{1}{2}} & \leq \left(
\dint_{0}^{1}\dint_{0}^{1}\left( t\left\vert f^{\prime }(a)\right\vert
^{2}+(1-t)\left\vert f^{\prime }(b)\right\vert ^{2}\right) dtds\right) ^{%
\frac{1}{2}} \\ 
&  \\ 
& =\left( \dfrac{\left\vert f^{\prime }(a)\right\vert ^{2}+\left\vert
f^{\prime }(b)\right\vert ^{2}}{2}\right) ^{\frac{1}{2}}.%
\end{array}
\label{13}
\end{equation}%
Therefore, using (\ref{12}) and (\ref{13}) in (\ref{11}), we obtain (\ref{9}%
).
\end{proof}

\begin{theorem}
\label{z3} Let $f:I^{\circ }\subset \mathbb{R}\rightarrow \mathbb{R}$ be a
differentiable mapping on $I^{\circ }$, $a,b\in I^{\circ }$ with $a<b$. If $%
\left\vert f^{\prime }\right\vert ^{q}$ is convex on $\left[ a,b\right] ,\
q>1,$ then the following inequality holds:%
\begin{equation}
\left\vert f(\frac{a+b}{2})-\dfrac{1}{b-a}\dint_{a}^{b}f(x)dx\right\vert
\leq \left( b-a\right) \left( \frac{2}{\left( p+1\right) \left( p+2\right) }%
\right) ^{\frac{1}{p}}\left( \dfrac{\left\vert f^{\prime }(a)\right\vert
^{q}+\left\vert f^{\prime }(b)\right\vert ^{q}}{2}\right) ^{\frac{1}{q}}
\label{14}
\end{equation}%
where $\frac{1}{p}+\frac{1}{q}=1.$
\end{theorem}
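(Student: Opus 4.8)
The plan is to follow the same scheme as the proof of Theorem~\ref{z2}, but to replace the Cauchy--Schwarz step by H\"older's inequality with conjugate exponents $p$ and $q$. Starting from the identity in Lemma~\ref{z}, I would first pass to absolute values and apply the triangle inequality to the integrand,
$$\left\vert f^{\prime}(ta+(1-t)b)-f^{\prime}(sa+(1-s)b)\right\vert \le \left\vert f^{\prime}(ta+(1-t)b)\right\vert+\left\vert f^{\prime}(sa+(1-s)b)\right\vert .$$
Since the square $[0,1]\times[0,1]$ is invariant under the interchange $t\leftrightarrow s$, under which $\left\vert m(t)-m(s)\right\vert$ is unchanged, the two resulting double integrals coincide; this is exactly the reduction already used in Theorem~\ref{z2}, and it turns the prefactor $\tfrac{b-a}{2}$ into $(b-a)$ multiplying the single double integral $\int_{0}^{1}\int_{0}^{1}\left\vert f^{\prime}(ta+(1-t)b)\right\vert\left\vert m(t)-m(s)\right\vert\,dt\,ds$.

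Next I would apply H\"older's inequality for double integrals to factor this quantity as
$$\left(\int_{0}^{1}\int_{0}^{1}\left\vert m(t)-m(s)\right\vert^{p}\,dt\,ds\right)^{\frac{1}{p}}\left(\int_{0}^{1}\int_{0}^{1}\left\vert f^{\prime}(ta+(1-t)b)\right\vert^{q}\,dt\,ds\right)^{\frac{1}{q}}.$$
The second factor is treated precisely as in Theorem~\ref{z2}: convexity of $\left\vert f^{\prime}\right\vert^{q}$ gives $\left\vert f^{\prime}(ta+(1-t)b)\right\vert^{q}\le t\left\vert f^{\prime}(a)\right\vert^{q}+(1-t)\left\vert f^{\prime}(b)\right\vert^{q}$, and since the integrand is independent of $s$, integrating in $t$ and $s$ produces $\tfrac{\left\vert f^{\prime}(a)\right\vert^{q}+\left\vert f^{\prime}(b)\right\vert^{q}}{2}$, whose $\tfrac1q$-th power is the last factor appearing in~\eqref{14}.

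The main obstacle is the evaluation of the weight integral $\int_{0}^{1}\int_{0}^{1}\left\vert m(t)-m(s)\right\vert^{p}\,dt\,ds=\tfrac{2}{(p+1)(p+2)}$, which generalizes the computation~\eqref{12} (the case $p=2$ gives $\tfrac16=\tfrac{2}{3\cdot 4}$, consistent with the general formula). I would split $[0,1]^{2}$ into the four blocks determined by the breakpoint $t,s=\tfrac12$ in the definition of $m$. On the two diagonal blocks, where $t,s\in[0,\tfrac12]$ or $t,s\in(\tfrac12,1]$, one has $m(t)-m(s)=t-s$, so each contributes $\int\int\left\vert t-s\right\vert^{p}$; on the two off-diagonal blocks the argument becomes $t-s+1$ (respectively $t-s-1$), which keeps a constant sign there, so the absolute value may be dropped. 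Each of the four pieces is then an elementary power integral; upon summing, the contributions proportional to $2^{-(p+1)}$ cancel and one is left with $\tfrac{2}{(p+1)(p+2)}$. Inserting this value together with the convexity bound into the H\"older estimate yields~\eqref{14}, which completes the proof.
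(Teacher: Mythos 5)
Your proposal is correct and follows essentially the same route as the paper: triangle inequality plus the $t\leftrightarrow s$ symmetry to reduce to a single double integral with prefactor $(b-a)$, H\"older's inequality with exponents $p,q$, the convexity bound for the $q$-factor, and the four-block evaluation of $\int_{0}^{1}\int_{0}^{1}\left\vert m(t)-m(s)\right\vert ^{p}\,dt\,ds=\tfrac{2}{(p+1)(p+2)}$ with the $2^{-(p+1)}$ terms cancelling. Your sign analysis on the off-diagonal blocks and the cancellation claim both check out (indeed more cleanly than the paper's own display of $J_{1}$, which carries a harmless typo).
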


\begin{proof}
From Lemma \ref{z}, we observe that%
\begin{equation}
\begin{array}{l}
\left\vert f(\dfrac{a+b}{2})-\dfrac{1}{b-a}\dint_{a}^{b}f(x)dx\right\vert \\ 
\\ 
\ \ \ \ \ \ \ \ \ \ =\dfrac{b-a}{2}\left\vert
\dint_{0}^{1}\dint_{0}^{1}\left( f^{\prime }(ta+(1-t)b)-f^{\prime
}(sa+(1-s)b)\right) \left( m\left( s\right) -m\left( t\right) \right)
dtds\right\vert \\ 
\\ 
\ \ \ \ \ \ \ \ \ \ \leq \dfrac{b-a}{2}\left[ \dint_{0}^{1}\dint_{0}^{1}%
\left\vert f^{\prime }(ta+(1-t)b)-f^{\prime }(sa+(1-s)b)\right\vert
\left\vert m\left( t\right) -m\left( s\right) \right\vert dtds\right] \\ 
\\ 
\ \ \ \ \ \ \ \ \ \ \leq \dfrac{b-a}{2}\left[ \dint_{0}^{1}\dint_{0}^{1}%
\left\vert f^{\prime }(ta+(1-t)b)\right\vert \left\vert m\left( t\right)
-m\left( s\right) \right\vert dtds\right. \\ 
\\ 
\ \ \ \ \ \ \ \ \ \ \left. +\dint_{0}^{1}\dint_{0}^{1}\left\vert f^{\prime
}(sa+(1-s)b)\right\vert \left\vert m\left( t\right) -m\left( s\right)
\right\vert dtds\right] \\ 
\\ 
\ \ \ \ \ \ \ \ \ \ \leq \left( b-a\right)
\dint_{0}^{1}\dint_{0}^{1}\left\vert f^{\prime }(ta+(1-t)b)\right\vert
\left\vert m\left( t\right) -m\left( s\right) \right\vert dtds \\ 
\\ 
\ \ \ \ \ \ \ \ \ \ \leq \left( b-a\right) \left[ \left(
\dint_{0}^{1}\dint_{0}^{1}\left\vert m\left( t\right) -m\left( s\right)
\right\vert ^{p}dtds\right) ^{\frac{1}{p}}\left(
\dint_{0}^{1}\dint_{0}^{1}\left\vert f^{\prime }(ta+(1-t)b)\right\vert
^{q}dtds\right) ^{\frac{1}{q}}\right] .%
\end{array}
\label{15}
\end{equation}%
By definitions of $m(t)$ and $m(s)$, we get%
\begin{equation*}
\begin{array}{l}
\dint_{0}^{1}\dint_{0}^{1}\left\vert m\left( t\right) -m\left( s\right)
\right\vert ^{p}dtds \\ 
\\ 
\text{ \ \ }=\dint_{0}^{1}\left\{ \dint_{0}^{1/2}\left\vert t-m\left(
s\right) \right\vert ^{p}dt+\dint_{1/2}^{1}\left\vert t-1-m\left( s\right)
\right\vert ^{p}dt\right\} ds \\ 
\\ 
\text{ \ \ }=\dint_{0}^{1/2}\dint_{0}^{1/2}\left\vert t-s\right\vert
^{p}dtds+\dint_{1/2}^{1}\dint_{0}^{1/2}\left\vert t-s+1\right\vert ^{p}dtds
\\ 
\\ 
\text{ \ \ }+\dint_{0}^{1/2}\dint_{1/2}^{1}\left\vert t-s-1\right\vert
^{p}dtds+\dint_{1/2}^{1}\dint_{1/2}^{1}\left\vert t-s\right\vert ^{p}dtds \\ 
\\ 
\text{ \ \ }=J_{1}+J_{2}+J_{3}+J_{4}.%
\end{array}%
\end{equation*}%
Thus, by simple computation we obtain%
\begin{equation}
\begin{array}{l}
J_{1}=\dint_{0}^{1/2}\dint_{0}^{1/2}\left\vert t-s\right\vert
^{p}dtds=\dint_{0}^{1/2}\left\{ \dint_{0}^{s}\left( s-t\right)
^{p}dt+\dint_{s}^{1/2}\left( t-s\right) ^{p}dt\right\} ds \\ 
\\ 
\text{ \ \ \ }=\dfrac{1}{p+1}\dint_{0}^{1/2}\left\{ s^{p+1}+\left( \dfrac{1}{%
2}-s\right) ^{p+1}\right\} ds=\dfrac{2}{2^{p+1}\left( p+1\right) \left(
p+2\right) },%
\end{array}
\label{17}
\end{equation}%
\begin{equation}
\begin{array}{l}
J_{2}=\dint_{1/2}^{1}\dint_{0}^{1/2}\left\vert t-s+1\right\vert
^{p}dtds=\dint_{1/2}^{1}\dint_{0}^{1/2}\left( t-s+1\right) ^{p}dtds,\text{ \
\ }\left( \text{ for }t-s+1\geq 0\right) \\ 
\\ 
\text{ \ \ \ }=\dfrac{1}{p+1}\dint_{1/2}^{1}\left\{ \left( \frac{3}{2}%
-s\right) ^{p+1}-\left( 1-s\right) ^{p+1}\right\} ds \\ 
\\ 
\text{ \ \ \ }=\dfrac{1}{\left( p+1\right) \left( p+2\right) }-\dfrac{1}{%
2^{p+1}\left( p+1\right) \left( p+2\right) },%
\end{array}
\label{18}
\end{equation}%
\begin{equation}
\begin{array}{l}
J_{3}=\dint_{0}^{1/2}\dint_{1/2}^{1}\left\vert t-s-1\right\vert
^{p}dtds=\dint_{0}^{1/2}\dint_{1/2}^{1}\left( -t+s+1\right) ^{p}dtds,\text{
\ \ }\left( \text{for }t-s-1\leq 0\right) \\ 
\\ 
\text{ \ \ \ }=\dfrac{1}{p+1}\dint_{0}^{1/2}\left\{ -s^{p+1}+\left( s+\frac{1%
}{2}\right) ^{p+1}\right\} ds \\ 
\\ 
\text{ \ \ \ }=\dfrac{1}{\left( p+1\right) \left( p+2\right) }-\dfrac{1}{%
2^{p+1}\left( p+1\right) \left( p+2\right) },%
\end{array}
\label{19}
\end{equation}%
\begin{equation}
\begin{array}{l}
J_{4}=\dint_{1/2}^{1}\dint_{1/2}^{1}\left\vert t-s\right\vert
^{p}dtds=\dint_{1/2}^{1}\left\{ \dint_{1/2}^{s}\left( s-t\right)
^{p}dt+\dint_{s}^{1}\left( t-s\right) ^{p}dt\right\} ds \\ 
\\ 
\text{ \ \ \ }=\dfrac{1}{p+1}\dint_{1/2}^{1}\left\{ \left( s-\frac{1}{2}%
\right) ^{p+1}+\left( 1-s\right) ^{p+1}\right\} ds \\ 
\\ 
\text{ \ \ \ }=\dfrac{1}{2^{p+1}\left( p+1\right) \left( p+2\right) }.%
\end{array}
\label{20}
\end{equation}%
Adding (\ref{17})-(\ref{20}), we have 
\begin{equation}
\left( \dint_{0}^{1}\dint_{0}^{1}\left\vert m\left( t\right) -m\left(
s\right) \right\vert ^{p}dtds\right) ^{\frac{1}{p}}=\left( \dfrac{2}{\left(
p+1\right) \left( p+2\right) }\right) ^{\frac{1}{p}}.  \label{21}
\end{equation}%
Since $\left\vert f^{\prime }\right\vert ^{q}$ is convex on $\left[ a,b%
\right] ,$ we know that for $t\in \lbrack 0,1]$%
\begin{equation*}
\left\vert f^{\prime }(ta+(1-t)b)\right\vert ^{q}\leq t\left\vert f^{\prime
}(a)\right\vert ^{q}+(1-t)\left\vert f^{\prime }(b)\right\vert ^{q},
\end{equation*}%
hence%
\begin{equation}
\begin{array}{ll}
\left( \dint_{0}^{1}\dint_{0}^{1}\left\vert f^{\prime
}(ta+(1-t)b)\right\vert ^{q}dtds\right) ^{\frac{1}{q}} & \leq \left(
\dint_{0}^{1}\dint_{0}^{1}\left( t\left\vert f^{\prime }(a)\right\vert
^{q}+(1-t)\left\vert f^{\prime }(b)\right\vert ^{q}\right) dtds\right) ^{%
\frac{1}{q}} \\ 
&  \\ 
& =\left( \dfrac{\left\vert f^{\prime }(a)\right\vert ^{q}+\left\vert
f^{\prime }(b)\right\vert ^{q}}{2}\right) ^{\frac{1}{q}}.%
\end{array}
\label{22}
\end{equation}%
Therefore, using (\ref{21}) and (\ref{22}) in (\ref{15}), we obtain (\ref{14}%
).
\end{proof}

\section{Applications to Some Special Means}

We now consider the applications of our Theorems to the following special
means:

(a) The arithmetic mean: $A=A(a,b):=\dfrac{a+b}{2},$ \ $a,b\geq 0,$

(b) The logarithmic mean: 
\begin{equation*}
L=L\left( a,b\right) :=\left\{ 
\begin{array}{ccc}
a & if & a=b \\ 
&  &  \\ 
\frac{b-a}{\ln b-\ln a} & if & a\neq b%
\end{array}%
\right. \text{, \ \ \ }a,b>0,
\end{equation*}

(c) The Identric mean:%
\begin{equation*}
I=I\left( a,b\right) :=\left\{ 
\begin{array}{ccc}
a & \text{if} & a=b \\ 
&  &  \\ 
\frac{1}{e}\left( \frac{b^{b}}{a^{a}}\right) ^{\frac{1}{b-a}}\text{ } & 
\text{if} & a\neq b%
\end{array}%
\right. \text{, \ \ \ }a,b>0,
\end{equation*}

(d) The $p-$logarithmic mean

\begin{equation*}
L_{p}=L_{p}(a,b):=\left\{ 
\begin{array}{ccc}
\left[ \frac{b^{p+1}-a^{p+1}}{\left( p+1\right) \left( b-a\right) }\right] ^{%
\frac{1}{p}} & \text{if} & a\neq b \\ 
&  &  \\ 
a & \text{if} & a=b%
\end{array}%
\right. \text{, \ \ \ }p\in \mathbb{R\diagdown }\left\{ -1,0\right\} ;\;a,b>0%
\text{.}
\end{equation*}%
The following proposition holds:

\begin{proposition}
\label{p.0} Let $a,b\in \mathbb{R}$, $0<a<b$ and $n\in 
\mathbb{Z}
$, $\left\vert n\right\vert \geq 1$. Then, we have%
\begin{equation*}
\left\vert A^{n}\left( a,b\right) -L_{n}^{n}\left( a,b\right) \right\vert
\leq \left\vert n\right\vert \frac{\left( b-a\right) }{\sqrt{6}}A\left(
a^{2(n-1)},b^{2(n-1)}\right) .
\end{equation*}
\end{proposition}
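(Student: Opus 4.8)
The plan is to specialize Theorem \ref{z2} to the power function $f(x)=x^{n}$ on the interval $[a,b]$ with $0<a<b$. This is the standard device for converting a Hermite--Hadamard type estimate into a comparison of classical means: the midpoint value $f(\tfrac{a+b}{2})$ produces $A^{n}(a,b)$, while the integral average of $f$ produces a $p$-logarithmic mean. Concretely, I would first record the elementary data $f'(x)=n\,x^{n-1}$, so that $\lvert f'(x)\rvert^{2}=n^{2}x^{2(n-1)}$, together with
\[
f\!\left(\frac{a+b}{2}\right)=\left(\frac{a+b}{2}\right)^{n}=A^{n}(a,b),
\qquad
\frac{1}{b-a}\int_{a}^{b}x^{n}\,dx=\frac{b^{n+1}-a^{n+1}}{(n+1)(b-a)}=L_{n}^{n}(a,b),
\]
the last equality being exactly the definition of the $p$-logarithmic mean with $p=n$ (valid for $n\neq -1$; the reciprocal-logarithmic case $n=-1$ would be handled by the direct evaluation $\frac{1}{b-a}\int_{a}^{b}x^{-1}\,dx=\frac{\ln b-\ln a}{b-a}$).

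The one hypothesis of Theorem \ref{z2} that requires genuine checking, and the only real obstacle, is the convexity of $\lvert f'\rvert^{2}$ on $[a,b]$. Here I would use the second-derivative test on $(0,\infty)$: writing $g(x)=x^{2(n-1)}$ one has
\[
g''(x)=2(n-1)\bigl(2(n-1)-1\bigr)x^{\,2(n-1)-2},
\]
which is nonnegative precisely when the exponent $2(n-1)$ lies outside the open interval $(0,1)$. The casework on integers $n$ with $\lvert n\rvert\ge 1$ then settles the matter: for $n\ge 1$ the exponent is either $0$ (so $g$ is constant, trivially convex) or an even integer $\ge 2$, and for $n\le -1$ the exponent is $\le -4<0$; in every admissible case $g$ is convex, and multiplying by the positive constant $n^{2}$ preserves convexity, so $\lvert f'\rvert^{2}$ is convex. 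I would be careful to flag that this is exactly where the hypothesis $0<a$ enters, since for $n\le -1$ the function $x^{2(n-1)}$ is only defined and convex on the positive axis.

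With convexity secured, applying Theorem \ref{z2} to $f$ and substituting $\lvert f'(a)\rvert^{2}=n^{2}a^{2(n-1)}$ and $\lvert f'(b)\rvert^{2}=n^{2}b^{2(n-1)}$ gives
\[
\left\lvert A^{n}(a,b)-L_{n}^{n}(a,b)\right\rvert
\le \frac{b-a}{\sqrt{6}}\left(\frac{n^{2}a^{2(n-1)}+n^{2}b^{2(n-1)}}{2}\right)^{\!1/2}
=\lvert n\rvert\,\frac{b-a}{\sqrt{6}}\,A\!\left(a^{2(n-1)},b^{2(n-1)}\right)^{1/2},
\]
after pulling the factor $n^{2}$ out of the square root, which is the bound asserted in the proposition. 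No delicate estimates remain once convexity is verified; everything else is bookkeeping in the definitions of $A$, $L_{n}$, and the arithmetic mean of the endpoint derivative values.
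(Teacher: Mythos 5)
Your approach is exactly the paper's: the published proof is the single sentence that the result is immediate from Theorem \ref{z2} applied to $f(x)=x^{n}$, and you have correctly supplied the routine verifications (the identification of the midpoint value with $A^{n}(a,b)$ and of the integral mean with $L_{n}^{n}(a,b)$, including the $n=-1$ caveat, and the convexity of $\left\vert f^{\prime }\right\vert ^{2}=n^{2}x^{2(n-1)}$ on $(0,\infty)$ via the sign of $2(n-1)\left(2(n-1)-1\right)$ for integer $\left\vert n\right\vert \geq 1$). One point deserves flagging, however: the bound you actually derive is $\left\vert n\right\vert \frac{b-a}{\sqrt{6}}\bigl[A\bigl(a^{2(n-1)},b^{2(n-1)}\bigr)\bigr]^{1/2}$, with the arithmetic mean raised to the power $\tfrac{1}{2}$, whereas the Proposition as printed has $A\bigl(a^{2(n-1)},b^{2(n-1)}\bigr)$ to the first power; these are not equivalent (for $a,b<1$ the mean is less than $1$ and its square root is the larger quantity), yet you assert that your expression ``is the bound asserted in the proposition.'' The exponent $\tfrac{1}{2}$ is what Theorem \ref{z2} genuinely yields --- compare Proposition \ref{p.1}, where the analogous exponent $\tfrac{1}{q}$ does appear --- so the discrepancy is almost certainly a typo in the statement rather than a flaw in your argument, but it should be acknowledged rather than silently elided.
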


\begin{proof}
The proof is immediate from Theorem \ref{z2} applied for $f(x)=x^{n}$, $x\in 
\mathbb{R}$, $n\in \mathbb{%
\mathbb{Z}
}$ and $\left\vert n\right\vert \geq 1$.
\end{proof}

\begin{proposition}
\label{p.1} Let $a,b\in \mathbb{R}$, $0<a<b$ and $n\in 
\mathbb{Z}
$, $\left\vert n\right\vert \geq 1$. Then, for all $q>1,$we have%
\begin{equation*}
\left\vert A\left( a^{n},b^{n}\right) -L_{n}^{n}\left( a,b\right)
\right\vert \leq \left\vert n\right\vert (b-a)\left( \frac{2}{(p+1)(p+2)}%
\right) ^{\frac{1}{p}}\left[ A\left( \left\vert a\right\vert
^{q(n-1)},\left\vert b\right\vert ^{q(n-1)}\right) \right] ^{\frac{1}{q}}.
\end{equation*}

\begin{proof}
The assertion follows from Theorem \ref{z3} applied for $f(x)=x^{n}$, $x\in 
\mathbb{R}$, $n\in \mathbb{%
\mathbb{Z}
}$ and $\left\vert n\right\vert \geq 1$.
\end{proof}
\end{proposition}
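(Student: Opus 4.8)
The quantity on the left of the claimed inequality is the \emph{trapezoidal} mean $A(a^{n},b^{n})=\frac{a^{n}+b^{n}}{2}=\frac{f(a)+f(b)}{2}$ for $f(x)=x^{n}$, whereas Theorem \ref{z3} controls the \emph{midpoint} value $f(\frac{a+b}{2})$. Hence Theorem \ref{z3} cannot be quoted verbatim, and the plan is to rerun its H\"{o}lder argument starting instead from the trapezoidal companion of Lemma \ref{z}. First I would record, by a single integration by parts of the same kind used in Lemma \ref{z}, the identity
\[
\frac{f(a)+f(b)}{2}-\frac{1}{b-a}\int_{a}^{b}f(x)\,dx=\frac{b-a}{2}\int_{0}^{1}(1-2t)f^{\prime }(ta+(1-t)b)\,dt,
\]
which holds for every differentiable $f$ with $f^{\prime }\in L[a,b]$; one sets $g(t)=f(ta+(1-t)b)$, integrates $\int_{0}^{1}(1-2t)g^{\prime }(t)\,dt$ by parts, and reverses the substitution $x=ta+(1-t)b$.

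Next I would take absolute values and apply H\"{o}lder's inequality with conjugate exponents $p,q$ to the single integral, obtaining
\[
\left| \frac{f(a)+f(b)}{2}-\frac{1}{b-a}\int_{a}^{b}f(x)\,dx\right| \leq \frac{b-a}{2}\left( \int_{0}^{1}\left| 1-2t\right| ^{p}\,dt\right) ^{1/p}\left( \int_{0}^{1}\left| f^{\prime }(ta+(1-t)b)\right| ^{q}\,dt\right) ^{1/q}.
\]
Splitting at $t=\frac{1}{2}$ gives $\int_{0}^{1}\left| 1-2t\right| ^{p}\,dt=\frac{1}{p+1}$, and the convexity of $\left| f^{\prime }\right| ^{q}$ yields, exactly as in the derivation of \eqref{22}, $\int_{0}^{1}\left| f^{\prime }(ta+(1-t)b)\right| ^{q}\,dt\leq \frac{\left| f^{\prime }(a)\right| ^{q}+\left| f^{\prime }(b)\right| ^{q}}{2}$. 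Together these bound the left-hand side by $\frac{b-a}{2}\left( \frac{1}{p+1}\right) ^{1/p}\left( \frac{\left| f^{\prime }(a)\right| ^{q}+\left| f^{\prime }(b)\right| ^{q}}{2}\right) ^{1/q}$.

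To match the coefficient displayed in \eqref{14} I would then compare constants: the bound just obtained is dominated by the asserted one because $\frac{1}{2}\left( \frac{1}{p+1}\right) ^{1/p}\leq \left( \frac{2}{(p+1)(p+2)}\right) ^{1/p}$, which after cancelling the common factor $(p+1)^{-1/p}$ is equivalent to $p+2\leq 2^{\,p+1}$. This single inequality is the main (and only nonroutine) point; it is elementary, since $h(p)=2^{\,p+1}-(p+2)$ satisfies $h(1)=1>0$ and $h^{\prime }(p)=2^{\,p+1}\ln 2-1>0$ for $p\geq 1$, so $h\geq 0$ on $[1,\infty)$. Finally I would substitute $f(x)=x^{n}$: here $\frac{f(a)+f(b)}{2}=A(a^{n},b^{n})$, $\frac{1}{b-a}\int_{a}^{b}x^{n}\,dx=L_{n}^{n}(a,b)$, and $\left| f^{\prime }(x)\right| ^{q}=\left| n\right| ^{q}\left| x\right| ^{q(n-1)}$, so that $\left( \frac{\left| f^{\prime }(a)\right| ^{q}+\left| f^{\prime }(b)\right| ^{q}}{2}\right) ^{1/q}=\left| n\right| \left[ A\left( \left| a\right| ^{q(n-1)},\left| b\right| ^{q(n-1)}\right) \right] ^{1/q}$, reproducing the stated inequality. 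One should also note in passing that $\left| f^{\prime }\right| ^{q}=\left| n\right| ^{q}x^{q(n-1)}$ is convex on $(0,\infty)$ for every integer $n$ with $\left| n\right| \geq 1$ (its exponent $q(n-1)$ is $\geq 1$, $=0$, or $<0$), so the convexity hypothesis needed in the bound is indeed satisfied.
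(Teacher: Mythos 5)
Your proof is correct, but it takes a genuinely different route from the paper's, and the difference exposes a real discrepancy in the statement. The paper's own proof is a one-line application of Theorem \ref{z3} to $f(x)=x^{n}$: that theorem bounds $\left\vert f\left( \frac{a+b}{2}\right) -\frac{1}{b-a}\int_{a}^{b}f(x)\,dx\right\vert$, which for $f(x)=x^{n}$ is $\left\vert A^{n}(a,b)-L_{n}^{n}(a,b)\right\vert$ --- the midpoint quantity, exactly as in Proposition \ref{p.0} --- and \emph{not} the quantity $\left\vert A(a^{n},b^{n})-L_{n}^{n}(a,b)\right\vert$ literally printed in Proposition \ref{p.1}. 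So the printed left-hand side $A(a^{n},b^{n})=\frac{a^{n}+b^{n}}{2}$ is in all likelihood a misprint for $A^{n}(a,b)=\left( \frac{a+b}{2}\right)^{n}$, and the paper's citation of Theorem \ref{z3} is a valid proof only under that corrected reading. You instead took the statement at face value, observed correctly that $\frac{f(a)+f(b)}{2}$ is a trapezoidal rather than a midpoint quantity, and proved the literal statement from scratch: the Dragomir--Agarwal identity
\begin{equation*}
\frac{f(a)+f(b)}{2}-\frac{1}{b-a}\int_{a}^{b}f(x)\,dx=\frac{b-a}{2}\int_{0}^{1}(1-2t)f^{\prime }(ta+(1-t)b)\,dt,
\end{equation*}
H\"{o}lder's inequality with $\int_{0}^{1}\left\vert 1-2t\right\vert ^{p}dt=\frac{1}{p+1}$, the same convexity step as \eqref{22}, and the constant comparison $\frac{1}{2}(p+1)^{-1/p}\leq \left( \frac{2}{(p+1)(p+2)}\right) ^{1/p}$, equivalent to $p+2\leq 2^{p+1}$, which indeed holds for $p>1$. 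Every step checks out, including the convexity of $\left\vert f^{\prime }\right\vert ^{q}=\left\vert n\right\vert ^{q}x^{q(n-1)}$ on $(0,\infty )$ for all integers $\left\vert n\right\vert \geq 1$. What your route buys: it proves the proposition exactly as displayed, it imports only one standard identity (that of reference \cite{SSDRPA}, not proved in this paper), and your intermediate bound $\frac{b-a}{2}(p+1)^{-1/p}$ is actually sharper than the stated constant. What the paper's route buys: brevity and uniformity with Proposition \ref{p.0}, but only for the midpoint form; had you simply ``applied Theorem \ref{z3}'' as the paper does, you would have proved a different inequality from the one displayed.
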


\begin{proposition}
\label{p.2} Let $a,b\in \mathbb{R}$, $0<a<b$. Then, for all $q>1$, we have 
\begin{equation*}
\ln \left[ \frac{I\left( a,b\right) }{A\left( a,b\right) }\right] \leq \frac{%
\left( b-a\right) }{ab}\left( \frac{2}{(p+1)(p+2)}\right) ^{\frac{1}{p}}%
\left[ A\left( \left\vert b\right\vert ^{q},\left\vert a\right\vert
^{q}\right) \right] ^{\frac{1}{q}}.
\end{equation*}
\end{proposition}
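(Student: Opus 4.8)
The plan is to apply Theorem \ref{z3} to the function $f(x)=-\ln x$ on $[a,b]$, exactly as Propositions \ref{p.0} and \ref{p.1} apply Theorems \ref{z2} and \ref{z3} to $f(x)=x^{n}$. First I would check the hypothesis of Theorem \ref{z3}: since $f'(x)=-1/x$, we have $\left\vert f'(x)\right\vert^{q}=x^{-q}$, whose second derivative is $q(q+1)x^{-q-2}>0$ on $(0,\infty)$. Hence $\left\vert f'\right\vert^{q}$ is convex on $[a,b]$ and Theorem \ref{z3} is applicable for every $q>1$.

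Next I would identify the two quantities on the left-hand side of Theorem \ref{z3}. The midpoint term is $f\left(\frac{a+b}{2}\right)=-\ln\frac{a+b}{2}=-\ln A(a,b)$. For the integral term, integration by parts gives $\int_{a}^{b}\ln x\,dx=b\ln b-a\ln a-(b-a)$, so that $\frac{1}{b-a}\int_{a}^{b}(-\ln x)\,dx=-\left(\frac{b\ln b-a\ln a}{b-a}-1\right)=-\ln I(a,b)$ by the very definition of the identric mean. Therefore $f\left(\frac{a+b}{2}\right)-\frac{1}{b-a}\int_{a}^{b}f(x)\,dx=\ln I(a,b)-\ln A(a,b)=\ln\left[\frac{I(a,b)}{A(a,b)}\right]$, which is precisely the expression to be bounded.

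It remains to rewrite the right-hand factor of Theorem \ref{z3}. Here $\left\vert f'(a)\right\vert^{q}=a^{-q}$ and $\left\vert f'(b)\right\vert^{q}=b^{-q}$, so the convexity factor is $\left(\frac{a^{-q}+b^{-q}}{2}\right)^{1/q}$. The one step that is not purely mechanical — though still only algebra — is to recognise that $\frac{a^{-q}+b^{-q}}{2}=\frac{a^{q}+b^{q}}{2a^{q}b^{q}}$, whence $\left(\frac{a^{-q}+b^{-q}}{2}\right)^{1/q}=\frac{1}{ab}\left(\frac{a^{q}+b^{q}}{2}\right)^{1/q}=\frac{1}{ab}\left[A\left(\left\vert a\right\vert^{q},\left\vert b\right\vert^{q}\right)\right]^{1/q}$, using $a,b>0$. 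Substituting this together with the value $\left(\frac{2}{(p+1)(p+2)}\right)^{1/p}$ into Theorem \ref{z3} gives $\left\vert\ln\left[\frac{I(a,b)}{A(a,b)}\right]\right\vert\leq\frac{b-a}{ab}\left(\frac{2}{(p+1)(p+2)}\right)^{1/p}\left[A\left(\left\vert b\right\vert^{q},\left\vert a\right\vert^{q}\right)\right]^{1/q}$, and dropping the absolute value on the left yields the stated inequality. I do not anticipate any genuine obstacle: the argument is a direct specialisation of Theorem \ref{z3}. The only points to watch are the sign bookkeeping that renders the left side as $\ln[I/A]$ (note that since $I\le A$ one in fact has $\ln[I/A]\le 0$, so the sharper content is the absolute-value form), and the factoring of $1/(ab)$ out of the derivative term that produces the claimed shape of the right-hand side.
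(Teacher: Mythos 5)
Your proposal is correct and is exactly the paper's approach: the authors also prove Proposition \ref{p.2} by applying Theorem \ref{z3} to $f(x)=-\ln x$ (they simply omit the details you supply, namely the convexity of $x^{-q}$, the identification of the integral term with $-\ln I(a,b)$, and the factoring of $1/(ab)$ out of $\left(\frac{a^{-q}+b^{-q}}{2}\right)^{1/q}$). Your remark that the sharper statement is the absolute-value form, since $\ln[I/A]\leq 0$, is a sound observation but does not change the argument.
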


\begin{proof}
The assertion follows from Theorem \ref{z3} applied to $f:(0,\infty
)\rightarrow (-\infty ,0),\;$ $f(x)=-\ln \left( x\right) $ and the details
are omitted.
\end{proof}

\begin{proposition}
\label{p.3} Let $a,b\in \mathbb{R}$, $0<a<b$. Then, for all $q\geq 1,$the
following inequality holds:%
\begin{equation*}
\left\vert A^{-1}\left( a,b\right) -L^{-1}\left( a,b\right) \right\vert \leq 
\frac{\left( b-a\right) }{\left( ab\right) ^{2}}\left( \frac{2}{(p+1)(p+2)}%
\right) ^{\frac{1}{p}}\left[ A\left( \left\vert a\right\vert
^{2q},\left\vert b\right\vert ^{2q}\right) \right] ^{\frac{1}{q}}.
\end{equation*}
\end{proposition}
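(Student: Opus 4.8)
The plan is to specialize Theorem \ref{z3} to the function $f(x) = 1/x$ on the interval $[a,b]$, which is legitimate because $0 < a < b$ keeps us inside the domain $(0,\infty)$. First I would record the two structural computations that single out $f(x) = x^{-1}$ as the correct choice: evaluating at the midpoint gives $f\!\left(\frac{a+b}{2}\right) = \frac{2}{a+b} = A^{-1}(a,b)$, while integrating gives $\frac{1}{b-a}\int_a^b \frac{dx}{x} = \frac{\ln b - \ln a}{b-a} = L^{-1}(a,b)$. Hence the left-hand side of the Hermite--Hadamard-type estimate in Theorem \ref{z3} becomes exactly $\bigl|A^{-1}(a,b) - L^{-1}(a,b)\bigr|$, the quantity we wish to bound.

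Next I would verify the hypothesis of Theorem \ref{z3}, namely that $|f'|^q$ is convex on $[a,b]$ for $q > 1$. Since $f'(x) = -x^{-2}$, we have $|f'(x)|^q = x^{-2q}$ on $(0,\infty)$, and because $\frac{d^2}{dx^2}x^{-2q} = 2q(2q+1)x^{-2q-2} > 0$, the function $|f'|^q$ is indeed convex, so the theorem applies.

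The only remaining work is to simplify the right-hand side produced by the theorem. With $|f'(a)|^q = a^{-2q}$ and $|f'(b)|^q = b^{-2q}$, the factor $\left(\frac{|f'(a)|^q + |f'(b)|^q}{2}\right)^{1/q}$ equals $\left(\frac{a^{-2q} + b^{-2q}}{2}\right)^{1/q}$; factoring $(ab)^{-2q}$ out of the numerator and using $\left((ab)^{-2q}\right)^{1/q} = (ab)^{-2}$ rewrites this as $\frac{1}{(ab)^2}\left(\frac{a^{2q} + b^{2q}}{2}\right)^{1/q} = \frac{1}{(ab)^2}\bigl[A(|a|^{2q}, |b|^{2q})\bigr]^{1/q}$, where $|a|^{2q} = a^{2q}$ and $|b|^{2q} = b^{2q}$ since $a,b > 0$. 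Substituting this back into the bound of Theorem \ref{z3} yields precisely the claimed inequality.

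I do not anticipate a genuine obstacle: this is a direct substitution into an already-proved theorem, and the only point of care is the algebraic step of pulling $(ab)^{-2}$ out of the power-mean factor correctly. The one subtlety worth flagging is that Theorem \ref{z3} is stated for $q > 1$, whereas the proposition asserts $q \geq 1$; the endpoint $q = 1$ is not covered by the cited theorem and would need a separate elementary argument, so I would either restrict attention to $q > 1$ or handle $q = 1$ directly.
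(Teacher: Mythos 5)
Your proposal is correct and follows exactly the paper's route: the paper's proof is a one-line appeal to Theorem \ref{z3} with $f(x)=1/x$, and you have simply supplied the details (midpoint and integral identifications, convexity of $x^{-2q}$, and the factoring of $(ab)^{-2}$) that the paper leaves implicit. Your observation that the stated range $q\geq 1$ exceeds the hypothesis $q>1$ of Theorem \ref{z3} is a legitimate point the paper overlooks; the endpoint $q=1$ would indeed require a separate (easy) argument or should be excluded.
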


\begin{proof}
The proof is obvious from Theorem \ref{z3} applied for $f(x)=\frac{1}{x},$ $%
x\in \left[ a,b\right] $.
\end{proof}

\end{document}